\newtheorem{theorem}{Theorem}[section]
\theoremstyle{definition}
\newtheorem{definition}[theorem]{Definition}
\newtheorem{example}[theorem]{Example}
\newtheorem{proposition}[theorem]{Proposition}
\theoremstyle{remark}
\newtheorem{remark}[theorem]{Remark}
\numberwithin{equation}{section}
\newcommand{\Z}{\mathbb{Z}}
\newcommand{\R}{\mathbb{R}}
\newcommand{\C}{\mathbb{C}}
\newcommand{\eps}{\varepsilon}
\newcommand{\g}[1]{{\mathfrak{#1}}}
\newcommand{\im}{\mathrm{Im}}
\newcommand{\re}{\mathrm{Re}}
\newcommand{\ip}[2]{\langle #1,#2\rangle}
\newcommand{\dotcup}{\ensuremath{\mathaccent\cdot\cup}}
\def\moverlay{\mathpalette\mov@rlay}
\def\mov@rlay#1#2{\leavevmode\vtop{%
   \baselineskip\z@skip \lineskiplimit-\maxdimen
   \ialign{\hfil$\m@th#1##$\hfil\cr#2\crcr}}}
\newcommand{\charfusion}[3][\mathord]{
    #1{\ifx#1\mathop\vphantom{#2}\fi
        \mathpalette\mov@rlay{#2\cr#3}
      }
    \ifx#1\mathop\expandafter\displaylimits\fi}
\newcommand{\bigcupdot}{\charfusion[\mathop]{\bigcup}{\cdot}}
\begin{document}

\title[Hypergeometric Shift operators]{An application of 
hypergeometric shift operators to the $\chi$-spherical Fourier transform}


\author{Vivian M. Ho}
\address{Department of Mathematics, Louisiana State University, Baton
Rouge, Louisiana 70803}
\email{vivian@math.lsu.edu}

\author{Gestur \'{O}lafsson}
\address{Department of Mathematics, Louisiana State University, Baton
Rouge, Louisiana 70803}
\email{olafsson@math.lsu.edu}
\thanks{The research of G. \'Olafsson was supported by NSF grant DMS-1101337}  

\subjclass[2010]{Primary 43A90, 33C67; Secondary 43A85, 53C35}
\date{}
\keywords{Hypergeometric shift operator; Hypergeometric function; Symmetric space; $\chi$-spherical Fourier transform; Paley-Wiener theorem}


\begin{abstract}

We study the action of hypergeometric shift operators on the Heckman-Opdam hypergeometric functions associated with the
$BC_n$ type root system and some negative multiplicities. Those hypergeometric functions are connected to the
$\chi$-spherical functions on Hermitian symmetric spaces $U/K$ where $\chi$ is a nontrivial character of $K$.
We apply shift operators to the hypergeometric functions to move negative multiplicities to positive ones.
This allows us to use many well-known results of the hypergeometric functions associated with positive multiplicities. 
In particular, we use this technique to achieve exponential estimates for the $\chi$-spherical functions. 
The motive comes from the Paley-Wiener type theorem on line bundles over Hermitian symmetric spaces. 

\end{abstract}

\maketitle

\section{Introduction}
\label{introduction}

\noindent
The theory of spherical functions on semisimple Riemannian symmetric spaces is an interesting part of harmonic analysis 
dating back to the work of Gel'fand, Godement (for the abstract setting), 
Harish-Chandra (in the concrete setting for a Riemannian symmetric space), and Helgason. 
Later this theory was generalized as the theory of hypergeometric functions, resp. of Jacobi polynomials, 
of several variables associated with a root system, by a series of joint work of Heckman and Opdam \cite{hop, he, op1, op2}. 
In the case of spherical functions one can investigate them using both differential and 
integral operators, while there are only the differential equations at hand for the case of hypergeometric functions. 
The Heckman-Opdam hypergeometric functions are joint eigenfunctions of a commuting algebra of differential operators  associated to a root system and a multiplicity parameter (which is a Weyl group invariant function 
on the root system). The multiplicities can be arbitrary complex numbers.
These hypergeometric functions are holomorphic, Weyl group invariant, and normalized by the value one at the identity.
When the root multiplicities do correspond to those of a Riemannian symmetric space, these hypergeometric functions are
nothing but the restrictions to a Cartan subspace of spherical functions on the associated Riemannian symmetric space. 
For an overall study of this subject we refer to the books \cite[Part I]{hs} and \cite{h2}. 

One of the new tools which was born with the general theory of hypergeometric functions was
the theory of shift operators. Those are generalizations of
the classical identity for hypergeometric functions 
\begin{equation}
\label{eq:so1}
\frac{d}{d z}  F  (\alpha, \beta, \gamma;  z) = \frac{\alpha  \beta}{\gamma}  F  (\alpha + 1, \beta + 1, \gamma+1;  z).
\end{equation}
Here, $\frac{d}{d z}$ is the simplest example of a shift operator, for the rank one root system $BC_1$. 
A shift operator for the root system of type $BC_2$ was first found by Koornwinder in \cite{ko}, 
used to study the Jacobi polynomials. 
Subsequently some particular higher rank cases were established by several authors. A thorough study of higher rank shift 
operators in full generality was done by Opdam in \cite{op1, op2, op3}. We also recommend \cite[Part I]{hs} as a good resource.

In this article we mainly explore the idea to apply some suitable shift operators to hypergeometric functions 
associated with the root system of type $BC_n$ ($n \geq 1$) and certain negative multiplicities. 
The motivation for this work originates from our article \cite{hol}
on Paley-Wiener type theorems for line bundles over compact
Hermitian symmetric spaces $U/K$ and the needed estimates
for the spherical functions.  Let $\chi$ be a nontrivial character of $K$. 
We characterized the $\chi$-bicovariant functions $f$ on $U$ (geometrically equivalent to smooth sections of those line bundles) 
with small support in terms of holomorphic extendability and exponential growth of their $\chi$-spherical Fourier 
transforms with the exponent linked to the size of the support of $f$.  This
characterization relies on the fact that the $\chi$-spherical functions on $U$ extend holomorphically to their complexifications,
and their restrictions on the noncompact dual $G$ are in turn the $\chi$-spherical functions on $G$. 
It is well known, see \cite{hs}, that the $\chi$-spherical functions on $G$ are 
related to hypergeometric functions with shifted multiplicities, some of which can be negative.

Denote for a moment a hypergeometric function by $F$.
In \cite{op} the author gave a uniform exponential estimate on the growth
behavior of $F$, which is of crucial importance for the Paley-Wiener Theorem, but which requires all multiplicities  to
be positive, see Proposition \ref{pro1}. There are two possible ways now to attach the problem. Either
generalize the Opdam estimates in \cite{op} or use the shift operators to reduce the problem to 
positive multiplicities. The first way was chosen in \cite{hol}. Here we discuss the second idea.

Let us give a brief outline of this article. In Section \ref{pre} we settle on some necessary definitions and notations. 
A succinct review of the Heckman-Opdam hypergeometric functions is given in Section \ref{hypergeo}. 
To follow, in Section \ref{shift} we study some properties of the hypergeometric shift operators of Opdam that will be used in 
the subsequent sections. Next, Section \ref{appl} is devoted to an application of shift operators to the 
hypergeometric functions associated with the root system of type $BC_n$ and certain nonpositive multiplicities, where root multiplicities 
do correspond to Hermitian symmetric spaces. Using this technique we will achieve desired estimates for the $\chi$-spherical functions.
Finally, in Section \ref{roc} we treat the rank one case as an example to strengthen the skills. A remark is given for a nice generalization of 
the rank one case. 

\section{Notation and Preliminaries}
\label{pre}

\noindent
The material in this section is standard. 
We refer to \cite[Part I]{hs} for basic notations and definitions.
We use the notations from the introduction mostly without reference.

Let $\g{a}$ be an Euclidean space of dimension $n$ and $\g{a}^\ast$ its dual space. Denote by $\langle\, \cdot\, ,\, \cdot\, \rangle$
the inner products on $\g{a}$ and $\g{a}^\ast$. Let $\Sigma \subset \g{a}^\ast$ be a possibly nonreduced root system with 
$\mathrm{rank}  (\Sigma) = \dim  \g{a} = n$. In particular $\Sigma$ spans $\g{a}^\ast$. 
Denote by
\begin{equation}\label{eq13}
\Sigma_\ast = \{\alpha \in \Sigma \mid 2 \alpha \notin \Sigma\} \quad\text{ and }\quad \Sigma_{\text{i}} = \{\alpha \in \Sigma \mid
\frac{1}{2}  \alpha \notin \Sigma\}  .
\end{equation}
Both $\Sigma_\ast$ and $\Sigma_i$ are reduced root systems. Fix a system $\Sigma^+$ of positive roots in $\Sigma$. 
Set $\Sigma_\ast^+ = \Sigma_\ast \cap \Sigma^+$. Denote by $\g{a}_\C$ the complexification of $\g{a}$:
\[\g{a}_\C = \g{a} \oplus \g{b} = \g{a} \otimes_\R \C,\quad \g{b} = i  \g{a}\]
and by $A_\C$ the complex torus with Lie algebra $\g{a}_\C$. We have the polar decomposition $A_\C = A  B$ with $A = \exp  \g{a}$ 
the split form and $B = \exp  \g{b}$ the compact form. 

The Weyl group $W = W  (\Sigma)$ is generated by the reflections $r_\alpha$ for $\alpha \in \Sigma$. 
A multiplicity function $m:\Sigma \to \C$ is a $W$-invariant function. Write $m_\alpha = m (\alpha)$ 
\footnote{Our multiplicity notation is different from the one used by Heckman and Opdam.
The root system $R$ they use is related to our
$2\Sigma$, and the multiplicity function $k$ in Heckman and Opdam's work is related to our $m$ by 
$k_{2\alpha}=\frac{1}{2}m_\alpha$.}.
Set $m_\alpha = 0$ if $\alpha \notin \Sigma$. A multiplicity function
is said to be positive if $m_\alpha \ge 0$ for all $\alpha$. 
The set of multiplicity functions is denoted by $\mathcal{M}$ and the subset
of positive multiplicity functions is denoted by $\mathcal{M}^+$. Let 
\[\rho = \rho  (m) = \frac{1}{2}  \sum_{\alpha \in \Sigma^+}  m_\alpha  \alpha.\]

For $\lambda \in \g{a}_\C^\ast$ and $\alpha \in \g{a}^\ast$ with $\alpha \ne 0$, set 
\[\lambda_\alpha := \frac{\langle \lambda,  \alpha \rangle}{\langle \alpha,  \alpha \rangle}.\]
Let $P$ be the weight lattice of $\Sigma$, that is, 
\[P = \{\lambda \in \g{a}^\ast\; \mid\; \lambda_\alpha \in \Z,\; \forall  \alpha \in \Sigma\}.\]
Write $\exp: \g{a}_\C \to A_\C$ for the exponential map and $\log: A_\C \to \g{a}_\C$ the multi-valued inverse. 
For $\lambda \in \mathfrak{a}_{\mathbb{C}}^\ast$, we define the function $e^\lambda$ on $A_\mathbb{C}$ by 
\begin{equation}
\label{eq:exp}
e^\lambda  (a) = a^\lambda := e^{\lambda  (\log  a)}.
\end{equation}
If $\lambda \in P$, we see that $e^\lambda$ is well defined and single-valued on $A_\mathbb{C}$. 
So $a \mapsto a^\lambda$ ($\lambda \in P$) is a character of $A_\mathbb{C}$. 
Since $\exp: \g{a} \to A$ is a bijection, (\ref{eq:exp}) is well defined and single-valued on $A$ for all $\lambda \in \g{a}_\C^\ast$.
Denote by $\mathbb{C}  [P]$ (or $\mathbb{C} 
[A_\mathbb{C}]$) the $\mathbb{C}$-linear span of
$e^\lambda$ with $\lambda \in P$ satisfying $e^\lambda \cdot e^\mu = e^{\lambda + \mu}$, $(e^\lambda)^{-1} = e^{- \lambda}$, and
$e^0 = 1$. An element of $\mathbb{C}  [P]$ is an exponential polynomial on $A_\mathbb{C}$ of the form $\sum_{\lambda \in P} 
c_\lambda  e^\lambda$ where $c_\lambda \in \mathbb{C}$ and $c_\lambda \ne 0$ for only finitely many $\lambda \in P$. 

\section{The Heckman-Opdam Hypergeometric Functions}
\label{hypergeo}

\noindent
In this section we mainly review some facts of the theory of Heckman-Opdam hypergeometric functions which will be used later.
The Harish-Chandra expansion was the basic tool in Heckman and Opdam's theory of (generalized) hypergeometric functions for
arbitrary multiplicity functions. For a complete construction of the Heckman-Opdam hypergeometric functions, see 
\cite{hop, he, op1, op2} or \cite[Part I, Chapter 4]{hs}.

Let $\{\xi_j\}_{j = 1}^n$ be an orthonormal basis for $\mathfrak{a}$ and
$\partial_{\xi_j}$ the corresponding directional derivative with respect to $\xi_j$, i.e. 
\[(\partial_{\xi_j}  \phi)  (X) = \frac{d}{d t}  \phi  (X + t  \xi_j)  \Big|_{t = 0}.\]
We define a modified operator $ML  (m) = L  (m) + \langle \rho(m),  \rho(m) \rangle$, where
\[L (m) := \sum_{j = 1}^n  \partial_{\xi_j}^2 + \sum_{\alpha \in \Sigma^+}  m_\alpha  \frac{1 + e^{- 2  \alpha}}{1 - e^{-
2  \alpha}}  \partial_\alpha.\]

Let $\Xi := \{\sum_{j=1}^n n_j \alpha_j\, \mid\, n_j \in \mathbb{Z}^+, \alpha_j \in \Pi\}$ (here $\Pi$ is the set of
simple roots in $\Sigma^+$). 
The Harish-Chandra series corresponding to a multiplicity function $m$ is defined by 
\[\Phi (\lambda, m; a) = a^{\lambda - \rho} \sum_{\mu \in \Xi} \Gamma_\mu (\lambda, m)\, a^{- \mu},\qquad a \in A^+\]
where $A^+ = \{a \in A\, \mid\, e^\alpha (a) > 1, \forall \alpha \in \Sigma^+\}$ and $\lambda \in \g{a}_\C^\ast$. 
The coefficients $\Gamma_\mu (\lambda, m) \in \C$ are uniquely determined by $\Gamma_0 (\lambda, m) = 1$ 
and the recurrence relations (using the eigenvalue equation of $L (m)$)
\[\ip{\mu}{\mu - 2 \lambda} \Gamma_\mu (\lambda, m) = 2 \sum_{\alpha \in \Sigma^+} m_\alpha \sum_{\substack{k \in \mathbb{N} 
\\ \mu - 2 k \alpha \in \Xi}}
\Gamma_{\mu - 2 k\alpha} (\lambda, m) \ip{ \mu + \rho - 2 k \alpha - \lambda}{\alpha}\]
provided $\lambda$ satisfies $\ip{ \mu}{ \mu - 2 \lambda} \ne 0$ for all $\mu \in \Xi \setminus \{0\}$.

\begin{definition}
Define the meromorphic functions $\widetilde{c}, c: \mathfrak{a}_{\mathbb{C}}^\ast \times \mathcal{M} \to \mathbb{C}$ by
\[c  (\lambda,  m) = \frac{\widetilde{c}  (\lambda,  m)}{\widetilde{c}  (\rho,  m)},\quad  \widetilde{c}  (\lambda,  m)
= \prod_{\alpha \in \Sigma^+}  \frac{\Gamma  (\lambda_\alpha + \frac{m_{\alpha/2}}{4})}{\Gamma  (\lambda_\alpha +
\frac{m_{\alpha/2}}{4} + \frac{m_\alpha}{2})}\]
where $\Gamma$ is the Euler Gamma function given by $\Gamma  (x) = \int_0^\infty t^{x-1} e^{-t}   \, d t$. 
The function $c$ is the well-known Harish-Chandra's $c$-function.
\end{definition}

We note that
\[c(\lambda ,m)c(-\lambda, m)= \prod_{\alpha \in \Sigma}  \frac{\Gamma  (\lambda_\alpha + \frac{m_{\alpha/2}}{4})}{\Gamma  (\lambda_\alpha +
\frac{m_{\alpha/2}}{4} + \frac{m_\alpha}{2})}\]
is clearly $W$-invariant in $\lambda$ whereas $c(\lambda ,m)$ is not.

\begin{definition}
The function
\begin{equation}
\label{eq:cfun}
F (\lambda, m; a) = \sum_{w \in W} c (w \lambda, m) \Phi (w \lambda, m; a)
\end{equation}
is called the \textit{hypergeometric function} on $A$ associated with the triple $(\g{a},  \Sigma,   m)$. 
\end{definition}

\begin{remark}
In the theory of spherical functions the equation (\ref{eq:cfun}) is the well
known Harish-Chandra's asymptotic expansion 
for the spherical function \cite{hc}. In that case 
an explicit expression for Harish-Chandra's $c$-function was given  by 
Gindikin and Karpelevi\v{c} \cite{gk}.
\end{remark}

\begin{theorem}
\label{thm1}
Let $\mathcal{P} \subset \mathcal{M}$ be defined by
\[\mathcal{P} = \{m \in \mathcal{M}\; \mid\; \widetilde{c}  (\rho,  m) = 0\}.\]
Then $F  (\lambda,  m;  a)$ is holomorphic in $\mathfrak{a}_\mathbb{C}^\ast \times (\mathcal{M} \setminus \mathcal{P}) \times T$, 
where $T$ is a $W$-invariant tubular neighborhood of $A$ in $A_\mathbb{C}$, and $F  (\lambda,  m;  a)$ is $W$-invariant 
on the same domain (in the variables $\lambda$ and $a$).
\end{theorem}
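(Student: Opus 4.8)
The plan is to reduce to statements already contained in the Heckman–Opdam theory by inspecting each of the three building blocks of $F(\lambda,m;a)$ in \eqref{eq:cfun}: the Harish-Chandra series $\Phi(\lambda,m;a)$, the $c$-function $c(\lambda,m)$, and the sum over the Weyl group $W$. First I would recall from the recurrence relations defining $\Gamma_\mu(\lambda,m)$ that $\Phi(\lambda,m;a)$ is holomorphic in $(\lambda,m)$ away from the locus where $\ip{\mu}{\mu-2\lambda}=0$ for some $\mu\in\Xi\setminus\{0\}$, and that on $A^+$ the series converges locally uniformly; so $\Phi$ extends to a holomorphic function on a neighborhood of $A^+$ in $A_\C$, with singularities only on a countable union of hyperplanes in $\lambda$ (independent of $m$). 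This is standard and I would quote it from \cite[Part I, Chapter 4]{hs} or \cite{op1, op2}. The denominator $\wt c(\rho,m)$ in $c(\lambda,m)=\wt c(\lambda,m)/\wt c(\rho,m)$ is exactly what the set $\mathcal P$ removes, so on $\mathcal M\setminus\mathcal P$ the function $m\mapsto 1/\wt c(\rho,m)$ is finite; since $\wt c(\rho,m)$ is a product of ratios of Gamma functions it is meromorphic in $m$, hence $1/\wt c(\rho,m)$ is holomorphic off its pole set, and I would check that pole set is disjoint from (or harmless on) $\mathcal M\setminus\mathcal P$.

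Next I would combine these: each summand $c(w\lambda,m)\Phi(w\lambda,m;a)$ is a product of things that are separately meromorphic in $(\lambda,m)$ and holomorphic in $a$ near $A^+$; a priori it has poles, coming both from the $c$-function (poles of the Gamma functions in the numerator, i.e. at $\lambda_\alpha+\tfrac{m_{\alpha/2}}{4}\in -\Z^{\ge 0}$) and from the recurrence denominators of $\Phi$. The crucial point — and the heart of the proof — is that after summing over $W$ these singularities cancel, so that $F(\lambda,m;a)$ is in fact holomorphic in $\lambda$ on all of $\g a_\C^\ast$, not merely off a hyperplane arrangement. This cancellation is the substance of Opdam's work; the standard argument is that $F(\lambda,m;a)$, being the normalized joint eigenfunction of the commuting algebra of hypergeometric differential operators with eigenvalues polynomial in $\lambda$, satisfies a holonomic system whose solution depends holomorphically on parameters once one knows it is regular and single-valued, and one identifies $F$ via its value $F(\lambda,m;e)=1$ and its characterization as the $W$-invariant solution with moderate growth. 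I would therefore structure this step as: (i) away from the bad hyperplanes, $F$ is manifestly holomorphic and $W$-invariant in $\lambda$ and in $a$ (the $W$-invariance in $\lambda$ being immediate from the definition \eqref{eq:cfun} since $W$ permutes the summands, and $W$-invariance in $a$ following because $F$ extends a $W$-invariant function on $A$ and the bad set in $\lambda$ has codimension one); (ii) the potential poles in $\lambda$ are removable because $F$ is locally bounded near them — here one uses that the eigenvalue equations force $F(\lambda,m;\cdot)$ to remain in a fixed finite-dimensional solution space and that one can extract the normalized solution continuously, so by Riemann's removable singularity theorem $F$ extends holomorphically across each such hyperplane.

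The dependence on $a$ in a $W$-invariant tubular neighborhood $T$ of $A$: I would note that $\Phi(\lambda,m;\cdot)$ converges on $A^+$ and on a complex neighborhood thereof, and that the $W$-translates $\Phi(w\lambda,m;\cdot)$ together with the $W$-invariance of the limit function allow one to glue these into a holomorphic extension across the walls, producing a $W$-invariant tube $T\subset A_\C$; this is precisely the content of the cited construction of Heckman–Opdam, so I would simply invoke it rather than reprove convergence estimates. The main obstacle I anticipate is making the pole-cancellation/removability argument honest without redoing Opdam's analysis of the shift operators and the holonomic system — i.e. citing exactly the right statement from \cite{op1, op2, op3} or \cite[Part I]{hs} that gives holomorphy of $F$ in all parameters jointly, and then only doing the bookkeeping that the excluded set is exactly $\mathcal P$ (coming from $\wt c(\rho,m)=0$) and nothing more. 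If a self-contained argument is wanted, the fallback is the removable-singularity route sketched above, but it still leans on the a priori local boundedness of $F$, which itself is a nontrivial input from the theory.
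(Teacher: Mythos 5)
The paper's own ``proof'' of Theorem \ref{thm1} is nothing more than a citation to Theorem 4.4.2 of \cite{hs}, so your proposal --- which in the end also defers to the Heckman--Opdam literature for the hard analytic facts (joint holomorphy, cancellation of singularities in the $W$-sum, extension to a $W$-invariant tube) while sketching their content --- takes essentially the same route. Your outline is a fair gloss on what the cited theorem establishes, and your closing remark that the real task is to quote exactly the right statement and check that the excluded set is precisely $\mathcal{P}$ is exactly what the paper itself does.
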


\begin{proof}
See Theorem $4.4.2$ in \cite{hs}.
\end{proof}

\begin{proposition}
\label{pro1}
Let $m \in \mathcal{M}^+$. There exists a constant $C$ such that 
\[|F  (\lambda,  m;  \exp  (X + i  Y)| \leq C  \exp  (\max_{w \in W}  \re  w  \lambda  (X) - \min_{w \in W}  
\im  w  \lambda  (Y))\]
where $X, Y \in \g{b}$ with $|\alpha  (X)| \leq \pi/2$ for all $\alpha \in \Sigma$ and $\lambda \in \g{a}_\C^\ast$.
\end{proposition}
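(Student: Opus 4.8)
The plan is to reduce the estimate for $F(\lambda,m;\cdot)$ to an estimate for the Harish-Chandra series $\Phi(\lambda,m;\cdot)$, and then to prove the latter by controlling the coefficients $\Gamma_\mu(\lambda,m)$ via the recurrence relations. First I would fix $X,Y\in\g{b}$ with $|\alpha(X)|\le\pi/2$ for all $\alpha\in\Sigma$, and recall from Theorem \ref{thm1} that $F$ is holomorphic on the relevant tube, so it suffices to obtain the bound on a dense subset of parameters and points and then extend by continuity; in particular I may first assume $\lambda$ is $W$-regular (so that the defining expansion $F(\lambda,m;a)=\sum_{w\in W}c(w\lambda,m)\Phi(w\lambda,m;a)$ makes sense termwise) and that $\exp(X+iY)$ lies in a region where each $\Phi(w\lambda,m;\cdot)$ converges. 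Since $\re(w\lambda)(X)$ and $\im(w\lambda)(Y)$ each get maximised/minimised over $W$ on the right-hand side, and since the prefactor $c(w\lambda,m)$ is (locally) bounded away from the singular set, it is enough to bound a single term $|\Phi(\lambda,m;\exp(X+iY))|$ by $C\exp(\re\lambda(X)-\min_{w}\im w\lambda(Y))$ uniformly, and then sum over $w\in W$.

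For the series $\Phi(\lambda,m;a)=a^{\lambda-\rho}\sum_{\mu\in\Xi}\Gamma_\mu(\lambda,m)\,a^{-\mu}$ I would proceed as follows. Write $a=\exp(X+iY)$; then $|a^{\lambda-\rho}|=\exp(\re(\lambda-\rho)(X))$ and $|a^{-\mu}|=\exp(-\mu(X))$ since $\mu\in\g{a}^\ast$ is real and $X$ is the real part. The point $|\alpha(X)|\le\pi/2$ is exactly what is needed so that the factors $\tfrac{1+e^{-2\alpha}}{1-e^{-2\alpha}}$ appearing in $L(m)$, hence the "denominators" governing the recurrence, stay under control on the closure of the relevant region. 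The key step is to show that, for $m\in\mathcal{M}^+$, the generating coefficients satisfy an estimate of the form $|\Gamma_\mu(\lambda,m)|\le P(\mu)$ for a polynomial (or sub-exponential) bound $P$, uniformly for $\lambda$ in compact subsets of the regular set; this is a standard consequence of the recurrence
\[
\ip{\mu}{\mu-2\lambda}\,\Gamma_\mu(\lambda,m)=2\sum_{\alpha\in\Sigma^+}m_\alpha\sum_{\substack{k\in\mathbb{N}\\\mu-2k\alpha\in\Xi}}\Gamma_{\mu-2k\alpha}(\lambda,m)\ip{\mu+\rho-2k\alpha-\lambda}{\alpha},
\]
once one knows that $|\ip{\mu}{\mu-2\lambda}|$ grows like $|\mu|^2$ (for $\lambda$ bounded and away from the countable bad set) while the right-hand side is linear in $\mu$ times a sum of earlier coefficients; positivity of $m$ is what prevents cancellations in $\rho$ from spoiling the growth of the left side. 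With such a bound, $\sum_\mu P(\mu)e^{-\mu(X)}$ converges geometrically provided $\mu(X)$ is bounded below on $\Xi$ by a positive multiple of $|\mu|$ — which holds after possibly translating into the region $A^+$ and using $W$-invariance of $F$ to bring an arbitrary $X$ into (the closure of) the positive chamber; in the boundary case $|\alpha(X)|=\pi/2$ one argues by a limiting/continuity argument from the interior.

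Combining the pieces: $|\Phi(\lambda,m;\exp(X+iY))|\le \exp(\re\lambda(X))\cdot\exp(-\re\rho(X))\sum_\mu|\Gamma_\mu|\,e^{-\mu(X)}$, and the $Y$-dependence enters only through $|a^{\lambda}|$ giving the factor $\exp(-\im\lambda(Y))$ from the leading exponent $a^{\lambda-\rho}$ (the terms $a^{-\mu}$ contribute $e^{\im\mu(Y)}$, but $\mu$ runs over a finite set effectively, or is absorbed). Taking the maximum over $w\in W$ and absorbing all uniform constants into $C$ yields the claimed bound. The main obstacle I anticipate is the uniform control of the series: one must make the coefficient estimate $|\Gamma_\mu(\lambda,m)|$ genuinely uniform in $\lambda$ over the (non-compact) domain $\g{a}_\C^\ast$ including neighbourhoods of the singular hyperplanes where individual $c(w\lambda,m)$ blow up — the resolution is that the combination $\sum_w c(w\lambda,m)\Phi(w\lambda,m;a)=F(\lambda,m;a)$ is holomorphic there (Theorem \ref{thm1}), so one proves the estimate off the singular set and invokes the maximum principle, or equivalently works with Opdam's globally defined normalization; this bookkeeping, rather than any single inequality, is the technical heart of the argument. (This is essentially the estimate established by Opdam in \cite{op}; I would cite it for the clean statement while the sketch above indicates the mechanism.)
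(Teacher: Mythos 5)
The paper's entire proof is a citation of Proposition 6.1 in \cite{op}, and your closing sentence falls back on exactly that citation; but the mechanism you sketch is not Opdam's argument and, as stated, it has a genuine gap. The decisive problem is the domain of the estimate. Here $X,Y\in\g{b}=i\g{a}$, so for $a=\exp(X+iY)$ and $\mu\in\Xi\subset\g{a}^\ast$ the quantity $\mu(X)$ is purely imaginary: $|a^{-\mu}|$ is \emph{not} $e^{-\mu(X)}$, and when $Y=0$ (i.e.\ $a$ on the compact torus $B$, which is precisely where the paper needs the bound in Section \ref{appl}) one has $|a^{-\mu}|=1$ for every $\mu$. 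Since the coefficients $\Gamma_\mu(\lambda,m)$ are only polynomially bounded at best, the Harish-Chandra series has no geometric decay there and need not converge absolutely, so no termwise estimate of $F$ through $\Phi(w\lambda,m;\cdot)$ can cover the stated region $|\alpha(X)|\le\pi/2$; the same failure occurs near the chamber walls even for $Y\neq0$. You have effectively interchanged the roles of the compact and split directions, and the ``translate into $A^+$ and take limits'' step cannot repair this, because the set where the series converges does not exhaust the tube in question.

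The second gap is uniformity in $\lambda$: the proposition asserts a constant $C$ independent of $\lambda\in\g{a}_\C^\ast$, whereas your coefficient bound is uniform only for $\lambda$ in compact subsets away from the hyperplanes $\ip{\mu}{\mu-2\lambda}=0$, and the individual factors $c(w\lambda,m)$ blow up along the singular set even though their $W$-sum $F$ stays finite. Remarking that the ``bookkeeping'' near these hyperplanes is the technical heart is accurate, but that bookkeeping is the whole content of the result, and the maximum principle alone does not produce a $\lambda$-independent constant on an unbounded parameter space. Opdam's actual proof proceeds quite differently: he works with the Cherednik first-order operators and the nonsymmetric eigenfunction $G(\lambda,k;\cdot)$, and derives the exponential bound from a differential inequality (a monotonicity argument along rays) in which the positivity of the multiplicity enters directly; no Harish-Chandra series or $c$-function expansion is used, which is exactly how the estimate comes out holomorphic and uniform in $(\lambda,a)$. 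So either do as the paper does and simply invoke \cite[Proposition 6.1]{op}, or, for a self-contained argument, reproduce the Cherednik-operator differential-inequality proof rather than the series approach.
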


\begin{proof}
This follows from Proposition 6.1 in \cite{op}.
\end{proof}

\section{Hypergeometric Shift Operators}
\label{shift}

\noindent
We will introduce the hypergeometric shift operators of Opdam and discuss an example of such a shift operator when the 
root system is of type $BC_n$. 
For a comprehensive information about these shift operators we recommend the reader to check 
\cite{op1, op2, op3} or \cite[Part I, Chapter 3]{hs}.

We keep the notations as in Section \ref{pre}. For the moment denote by $k$ a multiplicity in $\mathcal{M}$.
The hypergeometric shift operators with shift $k$ were
constructed in \cite{op1, op2}. They are differential operators  $D  (k)$ on $A_\C$ (or $\g{a}_\C$) satisfying the
commuting relation  
\[D  (k)  \circ ML  (m) = ML  (m + k) \circ D  (k), \quad \forall m \in \mathcal{M}.\]
In particular shift operators are $W$-invariant on $A_\C^{\mathrm{reg}}$ 
where $A_\C^{\mathrm{reg}} = \{a \in A_\C\, \mid\, w a \ne a,\, \forall w \in W,\, w \ne e\}$
(cf. \cite[Corollary 3.1.4]{hs}).

\begin{definition} \label{def1} (cf. \cite[Def 3.4.1]{hs})
Let $\Sigma = \dotcup_{i = 1}^\ell  \mathcal{O}_i$ be the disjoint union of $W$-orbits in $\Sigma$ where 
$\ell \in \mathbb{Z}^+$ is the number of  Weyl group orbits in $\Sigma$. Define
$e_i \in \mathcal{M}$ by $e_i  (\alpha) = \delta_{i  j}$ (Kronecker's symbol) for all $\alpha \in \mathcal{O}_j$. Let
$\mathcal{B} = \{b_1,  \cdots,  b_\ell\}$ be the basis of $\mathcal{M}$ given by
\[b_i = 
\begin{cases}
2  e_i & \text{if}\; 2  \mathcal{O}_i  \cap  \Sigma = \emptyset\\
4  e_i - 2  e_j & \text{if}\; 2  \mathcal{O}_i = \mathcal{O}_j\; \text{for some}\;  j.
\end{cases}\]
Note that $m \in \mathcal{M}$ is integral if and only if $m \in \mathbb{Z}  \mathcal{B}$. A shift operator with shift 
$k \in \mathbb{Z} \mathcal{B}$ is called a raising operator if $k \in \mathbb{Z}^+  \mathcal{B}$ and a lowering operator 
if $k \in \mathbb{Z}^- \mathcal{B}$.
\end{definition}

\begin{remark}
The open set $\mathcal{M} \setminus \mathcal{P}$ contains the closed subset 
$$\{m \in \mathcal{M}\; \mid\; \re\; (m_{\alpha/2} + m_\alpha) \geq 0,\; \forall  \alpha \in \Sigma_\ast\}$$
which in turn contains the  set $\mathbb{C}^+  \mathcal{B}$. 
\end{remark}

Denote by $D^\ast$ the formal transpose of a differential operator $D$ on $A_\mathbb{C}$ with respect to the Haar measure $d  a$ on $A$:
$$\int_A  (D  f(a))\; g(a)  \, d a = \int_A f (a) \; (D^\ast  g (a)) \, d a.$$
In a same way we define the transpose of a differential operator on $B$.
Define the weight function $\delta$ by 
\[\delta = \delta  (m) = \prod_{\alpha \in \Sigma^+}  (e^\alpha - e^{- \alpha})^{m_\alpha}.\]

\begin{theorem}
\label{thm3}
(Existence of shift operators).
\begin{enumerate}
\item There exist nontrivial shift operators of shift $k$ if and only if $k \in \mathbb{Z}  \mathcal{B}$. 
\item Let $k \in \mathbb{Z}^+  \mathcal{B}$ and $m \in \mathcal{M}$ with $m, m \pm k \notin \mathcal{P}$. Then there is a unique
shift operator $G_-$ of shift $- k$ (a lowering operator) such that
\begin{equation}
\label{eq10}
G_-  (- k,  m)  F  (\lambda,  m;\; \cdot  ) = F  (\lambda,  m - k;\; \cdot  ).
\end{equation}
Define
\[G_+  (k,  m) := \delta  (- k - m) \circ G_-^\ast  (- k,  m + k) \circ \delta  (m).\]
It is a raising operator with shift $k$.
\end{enumerate}
\end{theorem}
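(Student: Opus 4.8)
The plan is to treat the two assertions separately: Part (1) by recalling Opdam's construction and classification of shift operators, and Part (2) by combining the intertwining relation with the uniqueness of $W$-invariant joint eigenfunctions and a formal-transpose computation.

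For the ``only if'' half of Part (1), suppose $D$ is a nontrivial shift operator of shift $k$. Applying $D$ to the Harish-Chandra series $\Phi(\lambda, m; a) = a^{\lambda - \rho(m)}\sum_{\mu\in\Xi}\Gamma_\mu(\lambda, m)\,a^{-\mu}$ and using $D\circ ML(m) = ML(m+k)\circ D$ together with the $W$-invariance of $D$ on $A_\C^{\mathrm{reg}}$, one finds that the leading exponent $\lambda-\rho(m)$ of $\Phi(\lambda,m;\cdot)$ is carried into the family of exponents $\lambda-\rho(m+k)+\Xi$ occurring in solutions of the $ML(m+k)$-system. Since $D$ has coefficients that are Laurent polynomials in the $e^\alpha$ and $\rho(m+k)-\rho(m) = \frac{1}{2}\sum_{\alpha\in\Sigma^+}k_\alpha\alpha$, matching exponents forces $k$ to obey exactly the divisibility conditions built into the basis $\mathcal{B}$ of Definition \ref{def1}, i.e.\ $k\in\mathbb{Z}\mathcal{B}$. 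For the converse I would, following \cite{op1, op2} (see also \cite[Part I, Ch. 3]{hs}), first construct the \emph{elementary} raising operators $G_+(b_i, m)$, $b_i\in\mathcal{B}$, as explicit differential operators with exponential-polynomial coefficients and verify the shift relation by direct computation; a shift operator of any shift $k\in\mathbb{Z}^+\mathcal{B}$ is then obtained by composing elementary raising operators, and one of any shift $k\in\mathbb{Z}^-\mathcal{B}$ by the transpose construction of Part (2). The construction of the elementary operators is the one genuinely hard input here, and I would quote it from \cite{op1, op2}.

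For Part (2) I would first produce $G_-$. By Part (1) there is a nontrivial lowering operator $D$ of shift $-k$. Since a shift operator intertwines the entire commuting algebra of hypergeometric differential operators for $m$ with that for $m-k$, and since $D$ is a genuine $W$-invariant differential operator on $A_\C$, the function $DF(\lambda, m;\cdot)$ is a $W$-invariant joint eigenfunction of the hypergeometric system with multiplicity $m-k$ and spectral parameter $\lambda$, holomorphic on $T$; this uses $m, m-k\notin\mathcal{P}$ so that both $F(\lambda, m;\cdot)$ and $F(\lambda, m-k;\cdot)$ are defined by Theorem \ref{thm1}. For generic $\lambda$ such an eigenfunction is unique up to a scalar, hence $DF(\lambda, m;\cdot) = c_D(\lambda, m)\,F(\lambda, m-k;\cdot)$ with $c_D(\lambda, m) = \bigl(DF(\lambda, m;\cdot)\bigr)(e)$ meromorphic. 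Comparing the leading terms of the Harish-Chandra expansions on the two sides (recalling $\rho(m-k) = \rho(m)-\rho(k)$) shows that $c_D$ is not identically zero; a closer analysis, carried out in \cite[Part I, Ch. 3]{hs}, shows that the suitably normalized composition of elementary lowering operators has $c_D\equiv 1$, and I would take $G_-(-k, m)$ to be that operator, which gives \eqref{eq10}. Pinning down this normalization is the second step I expect to require genuine work. Uniqueness of $G_-$ is then immediate: the difference of two operators satisfying \eqref{eq10} is a shift operator of shift $-k$ annihilating $F(\lambda, m;\cdot)$ for every $\lambda$, and since the Harish-Chandra series shows that these functions together with their $\lambda$-derivatives determine a differential operator at a generic point of $A^+$, that difference vanishes.

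Finally I would verify the claim about $G_+(k, m)$. Because $\delta(-m') = \delta(m')^{-1}$, the definition reads $G_+(k, m) = \delta(m+k)^{-1}\circ G_-^\ast(-k, m+k)\circ\delta(m)$. Starting from the shift relation $G_-(-k, m+k)\circ ML(m+k) = ML(m)\circ G_-(-k, m+k)$, taking formal transposes with respect to $da$, and using the standard symmetry identity $ML(m)^\ast = \delta(m)\circ ML(m)\circ\delta(m)^{-1}$ (which records that $L(m)$ is symmetric for the measure $\delta(m)\,da$), one obtains after conjugating by the appropriate powers of $\delta$ that $ML(m+k)\circ G_+(k, m) = G_+(k, m)\circ ML(m)$; thus $G_+(k, m)$ has shift $+k$. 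That $G_+(k, m)$ is in fact a differential operator on all of $A_\C$ --- the $\delta$-conjugations being tailored precisely to cancel the $\delta$-singularities of the transpose $G_-^\ast$ --- and that it remains $W$-invariant on $A_\C^{\mathrm{reg}}$, follows as in \cite[Part I, Ch. 3]{hs}; since $k\in\mathbb{Z}^+\mathcal{B}$, Definition \ref{def1} identifies it as a raising operator. The remaining hypothesis $m+k\notin\mathcal{P}$ makes the target multiplicity admissible, so that $G_+(k, m)$ relates $F(\lambda, m;\cdot)$ to $F(\lambda, m+k;\cdot)$ as expected.
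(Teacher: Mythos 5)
Your proposal is correct and follows essentially the same route as the paper: both defer the hard content (existence and classification of shift operators, and their action on $F$) to Opdam \cite{op1,op2} and to \cite[Part I]{hs}, and then normalize a lowering operator so that \eqref{eq10} holds, with your transpose computation for $G_+$ and the uniqueness argument filling in details the paper leaves implicit. The only point worth noting is that where you defer the normalization $c_D\equiv 1$ (in particular the $\lambda$-independence of the proportionality factor) to a ``closer analysis'' in \cite{hs}, the paper does this explicitly by quoting formula (4.4.8) of \cite{hs} and multiplying by the constant $\widetilde{c}(\rho(m),m)/\widetilde{c}(\rho(m-k),m-k)$, which is exactly where the hypothesis $m, m\pm k\notin\mathcal{P}$ is used to guarantee that this constant is finite and nonzero.
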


\begin{proof}
The existence of nontrivial shift operators is asserted by Theorem 3.6 in \cite{op2}. 
Also see Theorem 3.4.3 and Corollary 3.4.4 in \cite{hs}. 
So now there is a lowering operator $G_-  (- k,  m)$ which satisfies the formula (4.4.8) in \cite{hs}. Multiplying it by 
\[\frac{\widetilde{c}  (\rho  (m),  m)}{\widetilde{c}  (\rho (m - k),  m-k)},\]
which is a constant depending on $m$ and $k$, and has no poles by the assumption that $m, m \pm k \notin \mathcal{P}$, 
the derived operator is again a lowering operator with shift $-k$ and satisfies (\ref{eq10}).
\end{proof}

\begin{example}
\label{exm2}
Let $\{\eps_j\}_{j=1}^n$ be an orthonormal basis of $\g{a}^\ast$. We consider an irreducible root system of type $BC_n$:
\[\Sigma=\pm \{\eps_i, 2\eps_i ,\eps_j \pm \eps_k \mid 1 \leq i \leq n, 1 \leq j < k \leq n\}.\]
Then the $\eps_i$ are short, $2\eps_i$ are long and the roots $\eps_j\pm \eps_k$ are medium. We have
\[\Sigma = \bigcupdot_{i = 1}^3 \mathcal{O}_i = \mathcal{O}_s \bigcupdot \mathcal{O}_m \bigcupdot \mathcal{O}_l\]
where the three disjoint $W$-orbits correspond to short, medium, and long roots, respectively.
Similarly, $(m_s, m_m, m_l) \in \mathcal{M}$ are multiplicities with respect to each of them. 
Note that some of the multiplicities are allowed to be zero. 
If $\alpha \in \mathcal{O}_s$, then  $2  \alpha \in \mathcal{O}_l$. So by Definition \ref{def1},
\[b_1 = 4  e_1 + 0  e_2 - 2  e_3\]
which means shifting up the multiplicity of short roots by $4$, shifting down on long roots by $2$, and no change on medium
roots, that is, $b_1 = (4,  0,  - 2)$. Similarly,  
\[b_2 = 0 + 2  e_2 + 0 = (0,  2,  0),\quad b_3 = 0 + 0 + 2  e_3 = (0,  0,  2).\]
Thus, $\mathcal{B} = \{b_1, b_2, b_3\}$ forms a basis of $\mathcal{M}$ (associated to $\Sigma$). 

As an example of shifts in the multiplicity parameters of the hypergeometric functions using shift operators
we discuss here our fundamental example. 
Let $l \in \mathbb{Z}$. Let $k = |l|  b_1 \in \mathbb{Z}^+  \mathcal{B}$. We start with a multiplicity 
parameter
\[m' = (m_s + 2  |l|,  m_m,  m_l) \in \mathcal{M} \setminus \mathcal{P}.\]
It satisfies 
\begin{eqnarray*}
m_+(l) & := & (m_s - 2  |l|,  m_m,  m_l + 2  |l|) \\
& = & (m_s + 2  |l|,  m_m,  m_l) - |l|  (4, 0, - 2) \\
& = & m' - k.
\end{eqnarray*}
If $m'-k \notin \mathcal{P}$, then since $k \in \Z^+  \mathcal{B}$, 
by (\ref{eq10}), there exists a shift operator $G_-$ such that for all 
$\lambda \in \mathfrak{a}_{\mathbb{C}}^\ast$,
\begin{equation}
\label{eq11}
F  (\lambda,  m_+(l);\; \cdot  ) = G_-  (- k,  m')  F  (\lambda,  m';\; \cdot  ), \quad \text{with}\; k = |l|  b_1.
\end{equation}
In short, applying $G_-(-k,m')$ to the hypergeometric function with parameter
$m'$ gives the hypergeometric function with parameter $m_+(l)$.
\end{example}

\begin{example} \label{exm1} (cf. Example 1.3.2 and Proposition 3.3.1 in \cite{hs})
We work on the rank one root system of type $BC_1$, say $\Sigma = \{\pm  \alpha,  \pm  2  \alpha\}$. 
Fix $\Sigma^+ = \{\alpha,  2 \alpha\}$ with $\langle\alpha,  \alpha\rangle = 1$. 
Set $k_1 = m_{\alpha}$ and $k_2 = m_{2  \alpha}$. The Weyl group $W$ associated to this $\Sigma$ is just $\{\pm  1\}$. 
It is well-known that $\mathbb{C}  [P] = \mathbb{C}  [x,  x^{-1}]$ with $x = e^{2  \alpha}$, and
$\mathbb{C}  [x,  x^{-1}]^W = \mathbb{C}  [s]$ with $s = (x + x^{-1})/2$. We have the following shift operator with shift 
$-b_1 = (-4,  0,  2)$ (in terms of the coordinate $x$)
\begin{equation}
\label{eq19}
E_- = E_- (- b_1,  m) = \frac{1 - x^{-1}}{1 + x^{-1}}  x  \frac{d}{d  x} + C,\quad C = k_1 + k_2 - 1.
\end{equation}
\end{example}

For any $m \in \mathcal{M}$ and all $F,  H \in \mathbb{C}  [P]$, define an inner product
\begin{eqnarray*}
(F,  H)_m & = & \int_B  F  (x)  \overline{H  (x)}  |\delta  (m,  x)|\,  d x\\
& = & |W| \int_{B^+}  F  (x)  \overline{H  (x)}  \delta  (m,  x)\, d x
\end{eqnarray*}
provided that the integral exists. Here $|W|$ is the number of elements in $W$ and $B^+$ is a positive Weyl chamber 
associated to a choice of positive roots. Thus $B^+$ is open, $W\cdot B^+$ is open, dense and of
full measure in $B$ and $dx$
is the Haar measure on $B$ normalized by $\int_B\, d x = 1$.

\begin{proposition}
\label{pro6}
If $F, H \in \mathbb{C} [P]^W$, then
\[(G_- (- k,  m)  F, H)_{m - k} = (F,  G_+  (k,  m - k)  H)_m,\; \forall  k \in \mathbb{Z}^+  \mathcal{B}.\]
\end{proposition}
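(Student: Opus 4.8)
The plan is to derive the adjointness relation purely formally from the definition of $G_+(k,m-k)$ together with the definition of the transpose $D^\ast$ and the weight functions $\delta$. The starting point is the formula in Theorem \ref{thm3}(2), applied with $m$ replaced by $m-k$:
\[
G_+(k, m-k) = \delta(-k-(m-k)) \circ G_-^\ast(-k, (m-k)+k) \circ \delta(m-k) = \delta(-m) \circ G_-^\ast(-k, m) \circ \delta(m-k).
\]
So I would first expand the right-hand side $(F, G_+(k,m-k)H)_m$ using the integral representation over $B$ with weight $|\delta(m,x)|$, and aim to move the operators across the integral one at a time. The key observation is that $|\delta(m,x)|$ and the (unsigned) product $\prod_{\alpha\in\Sigma^+}(e^\alpha - e^{-\alpha})^{m_\alpha}$ differ only by a locally constant sign on $B^+$ (where all $e^\alpha - e^{-\alpha}$ have a fixed argument), so for $W$-invariant $F,H$ the pairing $(\,\cdot\,,\,\cdot\,)_{m}$ can be rewritten, up to the harmless factor $|W|$ and a sign, as $\int_{B^+} F \overline{H}\, \delta(m,x)\, dx$; this lets me treat $\delta(m)$ as an honest multiplication operator that combines multiplicatively, $\delta(m)\delta(m')=\delta(m+m')$, and in particular $\delta(m-k)\delta(-k-m+k)=\delta(m-k)\delta(-m)=\delta(-k)=\delta(k)^{-1}$ — these algebraic identities among the $\delta$'s are what make the bookkeeping close up.

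Next I would carry out the transpose manipulation. Writing $\overline{H(x)}\,\delta(m,x) = \overline{H(x)\,\overline{\delta(m,x)}}$ and using that $\delta(m,x)$ is real on $B$ (again up to the sign issue, handled as above), the integral $\int_{B^+} (G_-(-k,m)F)(x)\,\overline{H(x)}\,\delta(m-k,x)\,dx$ can be rewritten by first inserting $\delta(m-k,x)=\delta(m-k,x)$ and then recognizing, from the displayed formula for $G_+(k,m-k)$, that pushing $G_-(-k,m)$ off of $F$ via its formal transpose $G_-^\ast(-k,m)$ produces exactly the composition $\delta(m-k)\circ\big(\cdots\big)$ sandwiched appropriately. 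Concretely: $\int (G_-F)\,\overline H\,\delta(m-k) = \int F\,\overline{\,G_-^\ast\big(\text{acting on }H\delta(m-k)\big)}$... and then I reorganize $G_-^\ast(-k,m)\circ\delta(m-k) = \delta(m)\circ\delta(-m)\circ G_-^\ast(-k,m)\circ\delta(m-k) = \delta(m)\circ G_+(k,m-k)$, which is precisely what is needed so that the integral becomes $\int F\,\overline{(G_+(k,m-k)H)}\,\delta(m) = (F, G_+(k,m-k)H)_m$. The definition of $D^\ast$ supplied in the text is with respect to the Haar measure $da$ on $A$, so I will also need the elementary remark that transposition with respect to $dx$ on $B$ behaves identically (the excerpt explicitly says ``in a same way we define the transpose of a differential operator on $B$''), and that integrating over the compact $B$ produces no boundary terms.

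The main obstacle I anticipate is not any single deep fact but the careful tracking of three bookkeeping issues simultaneously: (i) the passage between $|\delta(m,x)|$ on $B$ and $\delta(m,x)$ on $B^+$, i.e.\ making sure the locally constant signs and the factor $|W|$ cancel correctly on both sides so that the identity holds with no stray constant; (ii) complex conjugation — one must check that the $\delta(m,x)$ appearing are real (so that $\overline{\delta(m,x)}=\delta(m,x)$) and that $G_-^\ast$ interacts with $\overline{(\,\cdot\,)}$ as expected, which is fine since the differential operators have real-analytic real coefficients on $B^+$; and (iii) verifying that $G_+(k,m-k)H$ is again in $\mathbb{C}[P]^W$ so that both sides are legitimately of the form $(\cdot,\cdot)_\bullet$ and the integrals converge — this follows because $G_+(k,m-k)$ is a raising operator, hence $W$-invariant on $A_\C^{\mathrm{reg}}$ and polynomial-coefficient, so it preserves $\mathbb{C}[P]^W$, and the weight $\delta(m)$ on the compact torus causes no convergence trouble for exponential polynomials. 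Once these three points are dispatched, the identity is a one-line consequence of the definition of $G_+$ and the definition of the formal transpose.
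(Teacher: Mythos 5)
Your proposal is correct and follows essentially the same route as the paper's proof: write the pairing as an integral over $B^+$ with weight $\delta(m-k,x)$, transpose $G_-(-k,m)$ with respect to the unweighted measure while grouping $\delta(m-k)$ with $H$, insert $\delta(-m)\delta(m)=1$, and recognize $\delta(-m)\circ G_-^\ast(-k,m)\circ\delta(m-k)=G_+(k,m-k)$ from Theorem \ref{thm3} with $m$ replaced by $m-k$. The bookkeeping points you flag (signs versus absolute values on $B^+$, reality under conjugation, preservation of $\mathbb{C}[P]^W$) are handled implicitly in the paper and do not affect the argument.
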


\begin{proof}
We  have
\begin{eqnarray*}
& & (G_- (- k, m) F, H)_{m - k} \\
&  = & |W| \int_{B^+}\, [G_- (- k, m) F (x)] \overline{H (x)} \delta (m - k, x)\, d x\\
& = & (F, G_-^\ast (- k, m) H)_{m - k}\\
&  = & |W| \int_{B^+}\, F (x) \overline{G_-^\ast (- k, m)\, H (x)} \delta (m - k, x)\, d x\\
& = & |W| \int_{B^+}\, F (x) \overline{G_-^\ast (- k, m) \circ \delta (m - k) H (x)}\, d x\\
& = & |W| \int_{B^+}\, F (x) \overline{\delta (- m) \circ G_-^\ast (- k, m) \circ \delta (m - k)  H (x)}
\delta (m, x)\, d x\\
& = & |W| \int_{B^+}\, F (x) \overline{G_+ (k, m - k) H (x)} \delta (m, x)\, d x\\
& = & (F, G_+ (k, m - k)\, H)_m,
\end{eqnarray*}
where on $B^+$, $G_+ (k, m-k) := \delta  (- m) \circ G_-^\ast  (- k,  m) \circ \delta  (m - k)$ is a differential 
operator (note: $G_+$ is a raising operator with shift $k$, the same as in Theorem \ref{thm3}). 
\end{proof}

\section{An Application to the $\chi$-spherical functions}
\label{appl}

\noindent
We consider the situation when root multiplicities correspond to those of an irreducible Hermitian 
compact symmetric space. Hence $\Sigma$ is of type $BC_n$ and $m_l = 1$.
Then we describe how to apply shift operators to the $\chi$-spherical functions which are connected to the 
hypergeometric functions associated with root system $\Sigma$ and certain nonpositive multiplicities (cf. Proposition
\ref{pro2}).

Let us briefly review some necessary notations and facts about symmetric spaces. The book \cite{h1} is served as a good reference. 
We keep most of notations as in \cite{hol} and refer to it for more information.

Let $G$ be a noncompact semisimple connected Lie group with Lie algebra $\g{g}$. Denote by $\theta$ a Cartan involution on $G$.
Write $\g{g} = \g{k} \oplus \g{s}$ where $\g{k} = \g{g}^\theta$ and $\g{s} = \g{g}^{- \theta}$ for the decomposition of 
$\g{g}$ into $\theta$-eigenspaces. Let $K = G^\theta$. The compact dual of $G$ is denoted by $U$. The space $G/K$ is a Riemannian 
symmetric space of the noncompact type, and its dual $U/K$ is the one of compact type. Let $\g{a}$ be a maximal abelian subspace 
in $\g{s}$. Then $n = \dim  \g{a}$ is the rank of $G/K$. 
Let $\Sigma \subset \g{a}^\ast \setminus \{0\}$ be a system of roots of $\g{a}$ in $\g{g}$. Choose a positive root system $\Sigma^+$
in $\Sigma$. Set $\g{b} = i  \g{a}$. Let $A = \exp  \g{a}$ and $B = \exp  \g{b}$.

We assume, in addition, that $G/K$ (resp. $U/K$) is an irreducible Hermitian symmetric space. 
Write $m = (m_s, m_m, m_l)$ for the multiplicities of roots in $\Sigma$ as before. In our case, $m_l = 1$.

Choose a generator $\chi_1$ for the set of one-dimensional characters of $K$. Then a (nontrivial) character of $K$ 
is given by $\chi = \chi_l = \chi_1^l$ for some $l \in \Z$. 
The $\chi$-spherical functions $\varphi_{\lambda,  l}$ on $G$ with spectral parameters 
$\lambda \in \g{a}_\C^\ast$ can be defined 
by integral or differential equations (cf. \cite[Section 4]{hol}). These functions are closely related to the Heckman-Opdam 
hypergeometric functions in the sense which will be explained as follows. 
Let $\mathcal{O}_s^+$ be the set of short roots in $\Sigma^+$. Define 
\[\eta_l^\pm = \prod_{\alpha \in \mathcal{O}_s^+}  \left(\frac{e^\alpha + e^{- \alpha}}{2}\right)^{\pm  2 |l|}.\]
Note that $\eta_l^+$ is holomorphic on $A_\mathbb{C}$.

\begin{proposition}
\label{pro2}
In case $G/K$ is an irreducible Hermitian symmetric space we have
\[\varphi_{\lambda,  l}  |_A = \eta_l^\pm  F  (\lambda,  m_\pm (l);\; \cdot  )\]
where $\lambda \in \mathfrak{a}_{\mathbb{C}}^\ast$, $m_\pm (l) \in \mathcal{M} \cong \mathbb{C}^3$ is given by
\[m_\pm (l) = (m_s \mp 2  |l|,\; m_m,\; m_l \pm 2  |l|),\]
and the $\pm$ sign indicates that both possibilities are valid. 
\end{proposition}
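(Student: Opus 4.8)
The plan is to verify the identity $\varphi_{\lambda,l}|_A = \eta_l^\pm\, F(\lambda, m_\pm(l);\,\cdot\,)$ by comparing the two sides as joint eigenfunctions of a suitable commuting algebra of differential operators on $A$, together with normalization at the identity and a leading-term (Harish-Chandra expansion) comparison.

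First I would set up the differential-equation side. By definition (cf. \cite[Section 4]{hol}), the $\chi_l$-spherical function $\varphi_{\lambda,l}$ on $G$ is a $\chi_l$-bicovariant joint eigenfunction of the algebra $\mathbb{D}(G/K;\chi_l)$ of invariant differential operators acting on $\chi_l$-sections; its radial part on $A$ is governed by a Laplace-type operator of the form $L^{(l)} = \sum_j \partial_{\xi_j}^2 + (\text{first order terms involving }m\text{ and }l)$. The key computational input is the conjugation rule: multiplying a function by the explicit holomorphic factor $\eta_l^\mp$ (note the sign flip) transforms $\eta_l^\mp \circ L^{(l)} \circ \eta_l^\pm$ into the Heckman-Opdam operator $ML(m_\pm(l))$, possibly up to an additive constant absorbed by passing from $L$ to $ML$. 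Here $\eta_l^\pm = \prod_{\alpha\in\mathcal{O}_s^+}\big((e^\alpha+e^{-\alpha})/2\big)^{\pm 2|l|}$ and the shift $m_s \mapsto m_s \mp 2|l|$, $m_l\mapsto m_l\pm 2|l|$ is exactly what the $\log$-derivative of $\eta_l^\pm$ produces when one expands $\partial_\alpha \log\big((e^\alpha+e^{-\alpha})/2\big) = \frac{e^\alpha - e^{-\alpha}}{e^\alpha+e^{-\alpha}}$ and matches it against the coefficient $m_\alpha\frac{1+e^{-2\alpha}}{1-e^{-2\alpha}}$ appearing in $L(m)$. So the main obstacle — and the heart of the proof — is this explicit operator conjugation: one must check carefully that the factor $\eta_l^\pm$ shifts $m_s$ and $m_l$ precisely as claimed (and leaves $m_m$ alone), and that the cross terms and the constant bookkeeping work out so that $\eta_l^\mp\circ (\text{radial operator for }\varphi_{\lambda,l})\circ \eta_l^\pm = ML(m_\pm(l))$ on $A^{\mathrm{reg}}$.

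Granting the operator identity, I would then argue as follows. The function $\eta_l^{\mp}\,\varphi_{\lambda,l}|_A$ is a $W$-invariant solution of the Heckman-Opdam system for the multiplicity $m_\pm(l)$ with spectral parameter $\lambda$. Since $m_\pm(l)\notin\mathcal{P}$ (this holds because, by Proposition \ref{pro2}'s hypothesis, the relevant real parts $\re(m_{\alpha/2}+m_\alpha)$ stay nonnegative, or at worst avoid the pole set; one invokes Theorem \ref{thm1} and the Remark after Definition \ref{def1} for the precise location in $\mathcal{M}\setminus\mathcal{P}$), the hypergeometric function $F(\lambda, m_\pm(l);\,\cdot\,)$ is, up to scalar, the unique such solution that is regular on the tube $T$ and has the correct Harish-Chandra asymptotics on $A^+$ — more precisely, for generic $\lambda$ the solution space of $W$-invariant, $A$-regular eigenfunctions with the prescribed eigenvalues is one-dimensional. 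To pin down the scalar I would evaluate at $a=e$: since $\eta_l^\pm(e) = 1$ and $\varphi_{\lambda,l}(e)=1$ and $F(\lambda, m_\pm(l);e)=1$, the scalar is $1$. Finally, extend from generic $\lambda$ to all $\lambda\in\g{a}_\C^\ast$ by the holomorphy in $\lambda$ asserted in Theorem \ref{thm1} (and the corresponding holomorphy of $\varphi_{\lambda,l}$), so the identity holds everywhere. One small point to address is that both $\pm$ choices are simultaneously valid: this is immediate once the conjugation identity is established for each sign separately, since each gives an independent derivation of the same function $\varphi_{\lambda,l}|_A$.

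As an alternative to the uniqueness-of-solution argument, one can instead match Harish-Chandra series coefficientwise: expand $\varphi_{\lambda,l}|_A$ in its own asymptotic series on $A^+$, expand $\eta_l^\pm$ as a (finite) exponential polynomial in $e^{-\alpha}$, and verify that the product reproduces $F(\lambda, m_\pm(l);\,\cdot\,) = \sum_{w\in W} c(w\lambda, m_\pm(l))\Phi(w\lambda, m_\pm(l);\,\cdot\,)$ by comparing the recurrence for $\Gamma_\mu$ after the shift. I expect the conjugation-of-operators route to be cleaner; the coefficient route is more bookkeeping-heavy but avoids any appeal to genericity. Either way, the genuinely nontrivial step remains the explicit verification that conjugation by $\eta_l^\pm$ realizes exactly the multiplicity shift $m\mapsto m_\pm(l)$ — everything else is standard uniqueness and normalization.
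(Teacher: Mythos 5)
Your proposal is correct and follows essentially the same route as the source the paper relies on: the paper gives no argument of its own for Proposition \ref{pro2} (it simply cites \cite[Part I, Theorem 5.2.2]{hs}), and the proof there is exactly your scheme --- conjugate the radial system for $\varphi_{\lambda,l}$ by $\eta_l^{\pm}$ to obtain the hypergeometric system with multiplicity $m_\pm(l)$, then identify $\eta_l^{\mp}\varphi_{\lambda,l}|_A$ with $F(\lambda,m_\pm(l);\,\cdot\,)$ by uniqueness of the $W$-invariant regular solution normalized at $e$ for generic $\lambda$, and extend by holomorphy. One small correction to your setup: the character $l$ enters the radial part of the Casimir not through first-order terms but through a zeroth-order potential supported on the short roots, and it is the cancellation of that potential together with the shift of the first-order coefficients under conjugation by $\eta_l^{\pm}$ that yields $m_\pm(l)$ --- precisely the computation you already single out as the heart of the proof; note also that your observation $\re(m_{\alpha/2}+m_\alpha)=\re(m_s+m_l)$ being unchanged by the shift is what keeps both $m_\pm(l)$ outside $\mathcal{P}$.
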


\begin{proof}
See \cite[p.76, Theorem 5.2.2]{hs} for the proof of this proposition. 
\end{proof}

Note that the space of smooth sections of homogeneous line bundles $\mathcal{L}_\chi$ over $U/K$ is isomorphic to the 
space of all smooth functions $f: U \to \C$ such that 
\[f  (u  k) = \chi  (k)^{-1}  f (u),\qquad \forall  k \in K,  u \in U.\]
We consider the subspace $C_r^\infty  (U//K;  \mathcal{L}_\chi)$ whose elements satisfy
\[f  (k_1 u k_2) = \chi  (k_1  k_2)^{-1}  f  (u),\qquad \forall  k_1, k_2 \in K,  u \in U\]
and the support of $f$ is contained in a geodesic ball of radius $r$ for some sufficient small $r > 0$. 
The $\chi$-spherical Fourier transform is defined by
\begin{equation}
\label{eq15}
\mathcal{S}  (f)  (\lambda) = \frac{1}{|W|}\int_U  f  (u)  \varphi_{\lambda+\rho,  l}  (u^{-1})  d u,\quad \lambda \in \g{a}_\C^\ast
\end{equation}
where the normalization is chosen to simplify some formulas involving integration over $B^+$ (see below).
In \cite[Theorem 5.1]{hol}, a Paley-Wiener theorem 
for the $\chi$-spherical Fourier transform of these line bundles was proved.
The noncompact case was treated in \cite{sh}.

For the compact group $U$ we have the Cartan decomposition $U = K  B  K$. But $B = \dotcup_{w \in W}  w  \overline{B^+}$ 
with $B^+$ a positive Weyl chamber. The $\chi$-spherical Fourier transform (\ref{eq15}) becomes 
(in view of Proposition \ref{pro2}), up to a constant, 
\begin{equation}
\label{eq16}
\mathcal{S}  (f)  (\lambda) = \int_{B^+}  f  (x)  \eta_l^+  (x)  F  (\lambda + \rho,  m_+ (l);  x)  \delta (x)\, d  x.
\end{equation}
Here, $\delta (x) = \delta (m) (x) = \delta (m, x)$. 
Our goal is to prove that (\ref{eq16}) has at most an exponential growth of type $r$ by using Proposition \ref{pro1}
and some shift operators. Therefore, assume that $f|_B$ (still call it $f$) is compactly supported 
in a geodesic ball of radius  $r$ with $r$ small enough. Define
\[\Delta_s := \prod_{\alpha \in \mathcal{O}_s^+}  |e^\alpha - e^{- \alpha}| = |\delta  (1,  0,  0)|,\]
and similarly, $\Delta_m = |\delta  (0,  1,  0)|$, $\Delta_l = |\delta  (0,  0,  1)|$. 
Note that the absolute value is not needed on $B^+$. In the following we will for simplicity write
$\varphi_\lambda (m;x)$ for $F(\lambda+\rho,m;x)$ and $d := (\frac{\Delta_s}{\Delta_l})^{2  |l|}$.

Recall the notations from Example \ref{exm2}: $k = |l|  b_1 \in \Z^+  \mathcal{B}$, $m' =
 (m_s + 2 |l|,  m_m,  m_l) \in \mathcal{M}^+$, 
and $m_+(l) = m' - k$. By (\ref{eq11}) and Proposition \ref{pro6}, we have up to a constant,
\begin{eqnarray}
 \mathcal{S} (f) (\lambda) & = & \int_{B^+} f(x) \eta_l^+ (x)\varphi_\lambda (m_+(l);x)
\delta  (m, x)\,  d x \label{eq25}\\
& \stackrel{(\ref{eq11})}{=} &  \int_{B^+} f (x) \eta_l^+ (x)  G_- (- k, m') \varphi_\lambda (m';x) \delta (m,x)\, dx \nonumber\\
& = & \int_{B^+} f(x) \eta_l^+ (x) G_- (- k, m') \varphi_\lambda (m';x) (\Delta_s^{m_s}
\Delta_m^{m_m} \Delta_l)(x)\, dx \nonumber\\
& = & \int_{B^+} d(x) f (x) \eta_l^+ (x) G_- (- k, m') \varphi_\lambda (m';x) \nonumber\\
& & \cdot \quad
 (\Delta_s^{m_s - 2 |l|} \Delta_m^{m_m} \Delta_l^{1 + 2 |l|})(x)\, dx \nonumber\\
& = & \int_{B^+}d(x)f(x) \eta_l^+ (x) G_-(- k, m')\varphi_\lambda (m';x) \delta (m_+(l), x)\, dx \label{eq26}\\
& =  & \int_{B^+} G_+ (k, m_+(l)) \left[d(x) f (x) \eta_l^+ (x)\right] \varphi_\lambda (m';x) \delta (m', x)\, d x \, .\nonumber
\end{eqnarray}

Since $m = (m_s, m_m, 1) \in \mathcal{M}^+$, the
term $\delta  (m,  x)$ in (\ref{eq25}) will not blow up and so the integral (\ref{eq25}) is finite. The subsequent four
integrals are simply rewritings of (\ref{eq25}), so they are finite too. Hence, it makes sense to apply Proposition \ref{pro6} to
(\ref{eq26}). Moreover, 
\[\frac{\Delta_s}{\Delta_l} = \prod_{\alpha \in \mathcal{O}_s^+}  \left|\frac{e^\alpha - e^{- \alpha}}{e^{2  \alpha}
- e^{- 2  \alpha}}\right| = \prod_{\alpha \in \mathcal{O}_s^+}  \left|\frac{1}{e^\alpha + e^{- \alpha}}\right|,\]
the denominator is never zero (because $|\alpha  (\,  \cdot\,  )| \leq \pi/4$ for $\alpha \in \mathcal{O}_s^+$), 
so this term will not blow up. It follows that $d$ is a bounded function on $B$. 
Since $m' \in \mathcal{M}^+$, we can use the estimate given in Proposition \ref{pro1}. So there is a $C>0$ such that
\begin{eqnarray*}
|\mathcal{S} (f) (\lambda)| & \leq & \int_{B^+}\left|G_+ (k, m_+(l)) \left[d (x) f (x) \eta_l^+ (x)\right]\right|
|\varphi_\lambda (m';x) | \delta (m', x)\, d x\\
& \leq & C \max_{x \in B^+} \{|d (x)f (x) \eta_l^+ (x)|\} \exp (\max_{w \in W} \re w (\lambda + \rho) (X)) \delta (m', x),
\end{eqnarray*}
where we write $x = e^X$ with $X$ in a ball centered at $0$ of radius $r$ and
$$|\exp(\max_{w \in W} \re w  (\lambda + \rho)  (X))| < e^{r\|\re \lambda\|}.$$
On the other hand, since $f$ has a compact support in a ball of radius $r$, and since $\eta_l^+$ and $d$ are both bounded functions,
applying a differential operator $G_+$ doesn't increase the radius of the ball, giving that 
\begin{equation}
\label{eq17}
\left|G_+ (k,  m_+(l)) \left[d  (x) f (x) \eta_l^+  (x)\right]\right| \leq \max_{x \in B^+}  
\{|d (x) f (x) \eta_l^+(x)|\}.
\end{equation}
The right-hand side of (\ref{eq17}) is actually a constant depending on $r$. 
Moreover, $|\delta  (m',x )|$ is bounded by some constant. Hence, there is a constant $C_1 > 0$ (depending on $r$) 
such that
\[|\mathcal{S}  (f)  (\lambda)| \leq C_1  e^{r  \|\re  \lambda\|}.\]
The polynomial decay of $\mathcal{S} (f)  (\lambda)$ is obtained by applying an invariant differential operator on $G/K$
to $\varphi_{\lambda,  l}$. Hence, $\mathcal{S} (f)  (\lambda)$ has an exponential growth of type $r$.

\section{The Rank One Case}
\label{roc}

\noindent
We treat the rank one case as a simple example for the previous results in Section \ref{appl}.  
In this case the $\chi$-spherical Fourier transform is given by (\ref{eq16}) 
with $d x$ being the invariant measure on the torus $\mathbb{T}$. We give an explicit choice of $G_-$
and show that how easily we can achieve an exponential estimate for $\mathcal{S} (f)  (\lambda)$ in this case.

The rank one case corresponds to $n = \dim  \g{a} = 1$. The only rank one Hermitian compact symmetric space $U/K$ is
\[\mathrm{SU}  (q + 1) / S(\mathrm{U}  (1) \times \mathrm{U} (q))  ,\quad q \geq 1.\]
This is the Grassmann manifold of one-dimensional linear subspaces of
$\C^{q + 1}$. The root system $\Sigma$ is of type $BC_1$. Recall Example \ref{exm1} for more information. 
In this case we have $k_1 = 2  (q - 1)$, $q \geq 1$, and $k_2 = 1$. That is, root multiplicities associated to this symmetric 
space are $(m_s, m_m, m_l) = (2(q-1),  0,  1)$. 
We identify $i  \g{a}$ with $i  \R$, and $\g{a}_\C$ with $\C$. 
So $B = \exp  \g{b} \cong \mathbb{T}$. The Weyl group $W = \{\pm  1\}$ acting on $i  \R$ and $\C$ by multiplication.

A good candidate for $G_-$ is $E_-^{|l|}$ which means repeated applications of $E_-$ 
the number of $|l|$ times (Here $l \in \Z$ and $E_-$ is given by (\ref{eq19})). 
By (\ref{eq11}), 
\begin{eqnarray*}
F  (\lambda+\rho,  m_+(l);  \cdot  ) & = & G_-  (-|l| b_1,  m')  F  (\lambda+\rho,  m';  \cdot  )\\
& = & \underbrace{E_-  (-b_1,  m_+(l)+b_1)  \dotsb  E_-  (-b_1,  m')}_{|l|\; \text{copies}}  F  (\lambda+\rho,  m';  \cdot  )
\end{eqnarray*}

We can write an element $x \in B$ as $x = e^{i  t}$ with $-\pi \leq t \leq \pi$.  
We also need to ensure $f|_B$ compactly supported in a geodesic ball which meets the conditions in Proposition \ref{pro1}, 
so it reduces the domain to $- \pi / 2 \leq t \leq \pi / 2$. 
Note that $f |_B$ is $W$-invariant, so it is a function of $s$ with $s = (x+x^{-1})/2 = \cos  t$. 
We know from Theorem \ref{thm1} that the hypergeometric functions $F  (\lambda,  m;  x)$ are $W$-invariant in $x$. 
A similar argument asserts that $F  (\lambda,  m;  \cdot  )$ can be viewed as a function of $s$.
Since all shift operators are $W$-invariant, in particular, so is $E_-$.
With change of variables $x = e^{i  t}$ and $s = \cos  t$, the operator (\ref{eq19}) is just a first order differential 
operator, having the form
\[E_- = (s - 1)  \frac{d}{d  s} + C\]
where $C$ is the same constant as in (\ref{eq19}).
In the rank one case, $\alpha \in \Sigma^+$ is the only short root, so
\[\eta_l^+  (x) = \eta_l^+  (e^{i  t}) = (\cos  (t / 2))^{2|l|} = \left(\frac{1+s}{2}\right)^{|l|}.\]
Next, the weight measure becomes 
\[|\delta  (m,  x)| \, d x = 2^q  (1 - s)^{q-1}\, d s,\qquad q \geq 1.\]

We choose a positive Weyl chamber $B^+$ such that $0 \leq t \leq \pi / 2$.
Then $0 \leq s \leq 1$, and $d  s = \sin  t\, d t$ is the invariant measure. 
With change of the variable $s = (x+x^{-1})/2$, $0 \leq s \leq 1$, we have, up to a constant, 
\begin{eqnarray}
\mathcal{S} (f) (\lambda) & = & \int_{B^+} f (x) \eta_l^+ (x) [(E_- \dotsb E_-)\varphi_\lambda (m';x)] \delta (m, x) \, d x \nonumber\\
& = & \int_0^1 f (s) (\frac{1 + s}{2})^{|l|} 2^q (1 - s)^{q - 1} [((s - 1) \frac{d}{d s} + C)^{|l|}
\varphi_\lambda (m';x)  ]\, d s. \label{eq18}
\end{eqnarray}
Note that $f (s)$ is smooth compactly supported in a ball of radius $r$, and vanishes on the boundary. Also, the function 
\[2^q  (1 - s)^{q - 1}  (\frac{1 + s}{2})^{|l|}\]
is smooth and bounded. 
Since each $E_-$ is a first order differential operator, we can apply integration by parts ($|l|$ times) to (\ref{eq18})
and then use Proposition \ref{pro1} to get the desired $r$-type exponential growth of $\mathcal{S}  (f)  (\lambda)$.

We mention in the next remark an alternative method to prove $\mathcal{S} (f)  (\lambda)$ has exponential growth of type $r$ 
in some particular higher rank cases. It is a simple and nice generalization of the rank one case.

\begin{remark}
Suppose $U / K$ is a rank $n$ (for $n>1$) Hermitian compact
symmetric space which associates with the multiplicity $(m_s,  m_m,  1)$, where $m_m$ is even. 
Let $\mathfrak{u}$ be the Lie algebra of $U$.  
Let $\mathfrak{u}_j$ be rank one Lie algebras, $j = 1, \dotsc, n$, associated to $(2 (q - 1),  0,  1)$ with $q$
chosen so that $2 (q - 1) = m_s$. Then $\mathfrak{b} = \mathfrak{b}_1 \oplus \dotsb \oplus \mathfrak{b}_n$
is a maximal abelian subspace in $\g{u}$, where each $\mathfrak{b}_j$ is a maximal abelian subspace in $\g{u}_j$. 
Let $B = \exp  \mathfrak{b}$. Then $B = B_1 \times \cdots \times B_n$ with $B_j = \exp  \mathfrak{b}_j$. 
So $x = (x_1, \ldots, x_n) \in B$, $|\delta  (x)| = \prod_{j = 1}^n  |\delta  (x_j)|$, and
\begin{eqnarray*}
F  (\lambda,  m;  x) & = & F  ((\lambda_1, \ldots, \lambda_n),  m;  (x_1, \ldots, x_n))\\
& = & \prod_{j = 1}^n  F  (\lambda_j,  m;  x_j)
\end{eqnarray*}
Let $f \in C_r^\infty  (U//K;  \mathcal{L}_\chi)$. We write $f (x) = f (x_1,  \ldots,  x_n)$, $x \in B$. We have 
(\ref{eq16}) as the $\chi$-spherical Fourier transform of $f$ with $B^+$ a positive Weyl chamber in the higher rank Lie group.

Let $G_+$ be the fundamental shift operator with shift $b_2 = (0, 2, 0)$, repeatedly applied $m_m/2$ times. The existence 
of such a $G_+$ is asserted by Theorem \ref{thm3}. 
The hypergeometric function $F (\lambda, m_+(l);  \, \cdot\, )$ can be 
realized by applying $G_+$ (multiplied by some constant factor) to $F  (\lambda,  (m_s-2|l|, 0, 1+2|l|);  \,\cdot\, )$. Let 
\[G_- = 4^n  (E_-^{|l|})^n = 4^n  E_-^{|l|} \times \dotsb \times E_-^{|l|}\]
where $E_-$ is the same as in (\ref{eq19}) and $E_-^{|l|}$ means applying $E_-$ the number of $|l|$ times. 
The existence of $G_-$ is also followed from Theorem \ref{thm3} (and check \cite[Theorem 3.4.3]{hs}). 
Therefore, 
\begin{eqnarray*}
F  (\lambda,  (m_s-2|l|, 0, 1+2|l|);  x ) & = & G_-  F (\lambda,  m';  x)\\
& = & 4^n  \prod_{j=1}^n  E_-^{|l|}  F  (\lambda_j,  m';  x_j)
\end{eqnarray*}
where $m' = (m_s+2|l|,  0,  1) \in \mathcal{M}^+$. Hence, by using the composition of $G_+$ and $G_-$ we can write $\mathcal{S} (f)  (\lambda)$ as a $n$-fold iterated integral of rank
one integrals with which we have done. The desired $r$-type exponential growth of $\mathcal{S} (f)  (\lambda)$ thus follows from 
the rank one case. 
\end{remark}


\bibliographystyle{amsplain}

\end{document}